\theoremstyle{plain}
\newtheorem{thm}{Theorem}[section]
\newtheorem{lem}[thm]{Lemma}
\theoremstyle{definition}
\newtheorem{defn}[thm]{Definition}
\theoremstyle{remark}
\newtheorem{rem}[thm]{Remark}
\newtheorem{eg}[thm]{Example}
 \font\cyr=wncyr10
 \newcommand{\nc}{\newcommand}
\DeclareMathOperator{\Trunc}{{Tr}}
\DeclareMathOperator{\sgn}{{sgn}}
\nc{\per}[1]{\underset{#1}{\boldsymbol \pi}\,}
 \nc{\MT}{{\rm MT}}
 \nc{\XX}{{X}}
 \nc{\gF}{{\varPhi}}
 \nc{\ot}{\otimes}
 \nc{\wht}{\widehat}
 \nc{\bwg}{{\bigwedge}}
 \nc{\wg}{{\wedge}}
 \nc{\mmu}{{\boldsymbol{\mu}}}
 \nc{\mal}{{{\scriptstyle \maltese}}}
 \nc{\fA}{{\mathfrak A}}
 \nc{\HH}{{\mathfrak H}}
 \nc{\ra}{\rightarrow}
 \nc{\ors}{{\bfs}}
 \nc{\orr}{{\bfr}}
 \nc{\os}{{\overset}}
 \nc{\G}{{\mathbb G}}
 \nc{\F}{{\mathbb F}}
 \nc{\Z}{{\mathbb Z}}
 \nc{\R}{{\mathbb R}}
 \nc{\N}{{\mathbb N}}
 \nc{\ZN}{{\mathbb Z_{\ge 0}}}
 \nc{\Q}{{\mathbb Q}}
 \nc{\C}{{\mathbb C}}
 \nc{\CP}{{\mathbb{CP}}}
 \nc{\Cnn}{{\mathbb C}_{\ge 0}}
 \nc{\Cp}{{\mathbb C}_{>0}}
 \nc{\MPV}{{\mathcal{MPV}}}
 \nc{\tB}{{\tilde B}}
 \nc{\oI}{{\overline{I}}}
 \nc{\bI}{{\bar{I}}}
 \nc{\suf}{{\ast\,}}
 \nc{\sufq}{{\ast_q\,}}
 \nc{\gam}{{\gamma}}
 \nc{\gG}{{\Gamma}}
 \nc{\om}{{\omega}}
 \nc{\vep}{{\varepsilon}}
 \nc{\ga}{{\alpha}}
 \nc{\gl}{{\lambda}}
 \nc{\gb}{{\beta}}
 \nc{\gf}{{\varphi}}
 \nc{\gd}{{\delta}}
 \nc{\orgd}{{\vec \gd\,}}
 \nc{\gs}{{\sigma}}
 \nc{\gth}{{\theta}}
 \nc{\gS}{{\Sigma}}
 \nc{\gk}{{\kappa}}
  \nc{\gz}{{\zeta}}
 \nc{\tgz}{{\tilde{\zeta}}}
 \nc{\gO}{{\Omega}}
 \nc{\sif}{{\mathcal S}}
 \nc{\gt}{{\tau}}
 \nc{\Lra}{\Longrightarrow}
 \nc{\lra}{\longrightarrow}
 \nc{\lmaps}{\longmapsto}
 \nc{\fS}{{\mathfrak S}}
 \nc{\DD}{{\mathfrak D}}
 \nc{\Llra}{\Longleftrightarrow}
 \nc{\ol}{\overline}
 \nc{\ola}{\overleftarrow}
 \nc{\lms}{\longmapsto}
 \nc{\cv}{{{\mathsf c}{\mathsf v}}}
 \nc{\zq}{{\zeta_q}}
 \nc\qup{{q\uparrow 1}}
 \nc{\us}{\underset}
 \nc{\tn}{{\tilde{n}}}
 \nc{\gD}{{\Delta}}
 \nc{\bi}{{\bf i}}
 \nc{\bfone}{{\bf 1}}
 \nc{\bfa}{{\bf a}}
 \nc{\bfb}{{\bf b}}
 \nc{\bfc}{{\bf c}}
 \nc{\bfd}{{\bf d}}
 \nc{\bfe}{{\bf e}}
 \nc{\bff}{{\bf f}}
 \nc{\bfg}{{\bf g}}
 \nc{\bfi}{{\bf i}}
 \nc{\bfj}{{\bf j}}
 \nc{\bfn}{{\bf n}}
 \nc{\bfl}{{\bf l}}
 \nc{\bfk}{{\bf k}}
 \nc{\bfm}{{\bf m}}
 \nc{\bfo}{{\bf o}}
 \nc{\bfp}{{\bf p}}
 \nc{\bfq}{{\bf q}}
 \nc{\bfr}{{\bf r}}
 \nc{\bfs}{{\bf s}}
 \nc{\bft}{{\bf t}}
 \nc{\bfu}{{\bf u}}
 \nc{\tbfs}{{\tilde{\bfs}}}
 \nc{\tbft}{{\tilde{\bft}}}
 \nc{\tbfu}{{\tilde{\bfu}}}
 \nc{\bfv}{{\bf v}}
 \nc{\bfw}{{\bf w}}
 \nc{\bfx}{{\bf x}}
 \nc{\bfy}{{\bf y}}
 \nc{\bfz}{{\bf z}}
 \nc{\bfB}{{\bf B}}
 \nc{\bfP}{{\bf P}}
 \nc{\bfQ}{{\bf Q}}
 \nc{\bfY}{{\bf Y}}
 \nc{\bfgb}{{\boldsymbol \gb}}
 \nc{\bfga}{{\boldsymbol \ga}}
 \nc{\bfrho}{{\boldsymbol \rho}}
 \nc{\bfchi}{{\boldsymbol \chi}}
 \nc{\QX}{{\Q\langle \bfX\rangle}}
 \nc{\QY}{{\Q\langle \bfY\rangle}}
 \nc{\CX}{{\C\langle \bfX\rangle}}
 \nc{\CY}{{\C\langle \bfY\rangle}}
 \nc{\QXX}{{\Q\langle\!\langle \bfX\rangle\!\rangle}}
 \nc{\QYY}{{\Q\langle\!\langle \bfY\rangle\!\rangle}}
 \nc{\CXX}{{\C\langle\!\langle \bfX\rangle\!\rangle}}
 \nc{\CYY}{{\C\langle\!\langle \bfY\rangle\!\rangle}}
 \nc{\bbA}{{\mathbb A}}
 \nc{\bbB}{{\mathbb B}}
 \nc{\bbC}{{\mathbb C}}
 \nc{\bbD}{{\mathbb D}}
 \nc{\bbE}{{\mathbb E}}
 \nc{\bbF}{{\mathbb F}}
 \nc{\bbG}{{\mathbb G}}
 \nc{\bbH}{{\mathbb H}}
 \nc{\bbI}{{\mathbb I}}
 \nc{\bbJ}{{\mathbb J}}
 \nc{\bbK}{{\mathbb K}}
 \nc{\bbL}{{\mathbb L}}
 \nc{\bbM}{{\mathbb M}}
 \nc{\bbN}{{\mathbb N}}
 \nc{\bbO}{{\mathbb O}}
 \nc{\bbP}{{\mathbb P}}
 \nc{\bbQ}{{\mathbb Q}}
 \nc{\bbR}{{\mathbb R}}
 \nc{\bbS}{{\mathbb S}}
 \nc{\bbT}{{\mathbb T}}
 \nc{\bbU}{{\mathbb U}}
 \nc{\bbV}{{\mathbb V}}
 \nc{\bbW}{{\mathbb W}}
 \nc{\bbX}{{\mathbb X}}
 \nc{\bbY}{{\mathbb Y}}
 \nc{\bbZ}{{\mathbb Z}}
 \nc{\bba}{{\mathbb a}}
 \nc{\bbb}{{\mathbb b}}
 \nc{\bbc}{{\mathbb c}}
 \nc{\bbd}{{\mathbb d}}
 \nc{\bbe}{{\mathbb e}}
 \nc{\bbf}{{\mathbb f}}
 \nc{\bbg}{{\mathbb g}}
 \nc{\bbh}{{\mathbb h}}
 \nc{\bbi}{{\mathbb i}}
 \nc{\bbk}{{\mathbb k}}
 \nc{\bbl}{{\mathbb l}}
 \nc{\bbm}{{\mathbb m}}
 \nc{\bbn}{{\mathbb n}}
 \nc{\bbo}{{\mathbb o}}
 \nc{\bbp}{{\mathbb p}}
 \nc{\bbq}{{\mathbb q}}
 \nc{\bbr}{{\mathbb r}}
 \nc{\bbs}{{\mathbb s}}
 \nc{\bbt}{{\mathbb t}}
 \nc{\bbu}{{\mathbb u}}
 \nc{\bbv}{{\mathbb v}}
 \nc{\bbw}{{\mathbb w}}
 \nc{\bbx}{{\mathbb x}}
 \nc{\bby}{{\mathbb y}}
 \nc{\bbz}{{\mathbb z}}
 \nc{\MZV}{{\mathcal{MZV}}}
 \nc{\calA}{{\mathcal A}}
 \nc{\calB}{{\mathcal B}}
 \nc{\calC}{{\mathcal C}}
 \nc{\calD}{{\mathcal D}}
 \nc{\calE}{{\mathcal E}}
 \nc{\calF}{{\mathcal F}}
 \nc{\calG}{{\mathcal G}}
 \nc{\calH}{{\mathcal H}}
 \nc{\calI}{{\mathcal I}}
 \nc{\calJ}{{\mathcal J}}
 \nc{\calK}{{\mathcal K}}
 \nc{\calL}{{\mathcal L}}
 \nc{\calM}{{\mathcal M}}
 \nc{\calN}{{\mathcal N}}
 \nc{\calO}{{\mathcal O}}
 \nc{\calP}{{\mathcal P}}
 \nc{\calQ}{{\mathcal Q}}
 \nc{\calR}{{\mathcal R}}
 \nc{\calS}{{\mathcal S}}
 \nc{\calT}{{\mathcal T}}
 \nc{\calU}{{\mathcal U}}
 \nc{\calV}{{\mathcal V}}
 \nc{\calW}{{\mathcal W}}
 \nc{\calX}{{\mathcal X}}
 \nc{\calY}{{\mathcal Y}}
 \nc{\calZ}{{\mathcal Z}}
  \nc{\cala}{{\mathcal a}}
 \nc{\calb}{{\mathcal b}}
 \nc{\calc}{{\mathcal c}}
 \nc{\cald}{{\mathcal d}}
 \nc{\cale}{{\mathcal e}}
 \nc{\calf}{{\mathcal f}}
 \nc{\calg}{{\mathcal g}}
 \nc{\calh}{{\mathcal h}}
 \nc{\cali}{{\mathcal i}}
 \nc{\calj}{{\mathcal j}}
 \nc{\calk}{{\mathcal k}}
 \nc{\call}{{\mathcal l}}
 \nc{\calm}{{\mathcal m}}
 \nc{\caln}{{\mathcal n}}
 \nc{\calo}{{\mathcal o}}
 \nc{\calp}{{\mathsf p}}
 \nc{\calq}{{\mathcal q}}
 \nc{\calr}{{\mathcal r}}
 \nc{\cals}{{\mathcal s}}
 \nc{\calt}{{\mathcal t}}
 \nc{\calu}{{\mathcal u}}
 \nc{\calv}{{\mathcal v}}
 \nc{\calw}{{\mathcal w}}
 \nc{\calx}{{\mathcal x}}
 \nc{\caly}{{\mathcal y}}
 \nc{\calz}{{\mathcal z}}
 \nc{\frakA}{{\mathfrak A}}
 \nc{\frakB}{{\mathfrak B}}
 \nc{\frakC}{{\mathfrak C}}
 \nc{\frakD}{{\mathfrak D}}
 \nc{\frakE}{{\mathfrak E}}
 \nc{\frakF}{{\mathfrak F}}
 \nc{\frakG}{{\mathfrak G}}
 \nc{\frakH}{{\mathfrak H}}
 \nc{\frakI}{{\mathfrak I}}
 \nc{\frakJ}{{\mathfrak J}}
 \nc{\frakK}{{\mathfrak K}}
 \nc{\frakL}{{\mathfrak L}}
 \nc{\frakM}{{\mathfrak M}}
 \nc{\frakN}{{\mathfrak N}}
 \nc{\frakO}{{\mathfrak O}}
 \nc{\frakP}{{\mathfrak P}}
 \nc{\frakQ}{{\mathfrak Q}}
 \nc{\frakR}{{\mathfrak R}}
 \nc{\frakS}{{\mathfrak S}}
 \nc{\frakT}{{\mathfrak T}}
 \nc{\frakU}{{\mathfrak U}}
 \nc{\frakV}{{\mathfrak V}}
 \nc{\frakW}{{\mathfrak W}}
 \nc{\frakX}{{\mathfrak X}}
 \nc{\frakY}{{\mathfrak Y}}
 \nc{\frakZ}{{\mathfrak Z}}
 \nc{\fraka}{{\mathfrak a}}
 \nc{\frakb}{{\mathfrak b}}
 \nc{\frakc}{{\mathfrak c}}
 \nc{\frakd}{{\mathfrak d}}
 \nc{\frake}{{\mathfrak e}}
 \nc{\frakf}{{\mathfrak f}}
 \nc{\frakg}{{\mathfrak g}}
 \nc{\frakh}{{\mathfrak h}}
 \nc{\fraki}{{\mathfrak i}}
 \nc{\frakj}{{\mathfrak j}}
 \nc{\frakk}{{\mathfrak k}}
 \nc{\frakl}{{\mathfrak l}}
 \nc{\frakm}{{\mathfrak m}}
 \nc{\frakn}{{\mathfrak n}}
 \nc{\frako}{{\mathfrak o}}
 \nc{\frakp}{{\mathfrak p}}
 \nc{\frakq}{{\mathfrak q}}
 \nc{\frakr}{{\mathfrak r}}
 \nc{\fraks}{{\mathfrak s}}
 \nc{\frakt}{{\mathfrak t}}
 \nc{\fraku}{{\mathfrak u}}
 \nc{\frakv}{{\mathfrak v}}
 \nc{\frakw}{{\mathfrak w}}
 \nc{\frakx}{{\mathfrak x}}
 \nc{\fraky}{{\mathfrak y}}
 \nc{\frakz}{{\mathfrak z}}
 \nc{\so}{{\mathfrak so}}
 \nc{\sa}{{\mbox{{\scriptsize \cyr x}}}}
 \nc{\slfour}{{\mathfrak sl}_4}
 \nc{\one}{{\bf 1}}
 \nc{\zero}{{\bf 0}}
 \nc{\Qxy}{\Q\langle x,y\rangle}
\begin{document}

\title[A Family of MHS and MZSV Identities]{A Family of Multiple Harmonic Sum and \\
Multiple Zeta Star Value Identities}

% author one information
\author{Erin Linebarger}
\address{Department of Mathematics, Eckerd College, St. Petersburg, FL 33711}
\author{Jianqiang Zhao}
\address{Kavli Institute for Theoretical Physics China, Beijing, China,
Max Planck Institute for Mathematics, Vivatsgasse 7, 53111 Bonn, Germany, and
Department of Mathematics, Eckerd College, St. Petersburg, FL 33711}

\email{emlineba@eckerd.edu}

\email{zhaoj@eckerd.edu}

\subjclass{11A07, 11M32, 11B65, 11B68.}

\date{}

\keywords{Multiple harmonic sum, multiple zeta values, multiple zeta star values, alternating Euler sums.}

\begin{abstract} In this paper we present a new family of identities for multiple harmonic sums
which generalize a recent result of  Hessami Pilehrood et al.\ \cite{HessamiPilehrood2Tauraso2012}.
We then apply it to obtain a family of identities relating multiple zeta star values to alternating
Euler sums. In such a typical identity the entries of the multiple zeta star values
consist of blocks of arbitrarily long 2-strings separated by positive integers greater than two
while the largest depth of the alternating Euler sums depends only on the number of 2-string blocks
but not on their lengths.
\end{abstract}

\maketitle

%%%%%%%%%%%%%%%%%%%%%%%%%%%%%%%%%%%%%%%%%%%%%%%%%%%%%%%%%%%%%%%%%%%%%%%%%%
\section{Introduction}\label{intro}
In a recent paper \cite{HessamiPilehrood2Tauraso2012} Hessami Pilehrood et al.\ proved a few families of
identities involving (alternating) multiple harmonic sums and binomial coefficients and, as applications,
discovered some new congruences for multiple harmonic sums. In particular, they are able to confirm several
conjectures contained in \cite{Zhao2008a} posed by the second author of this paper.
They can also provide some new families of identities of MZSV
and a new proof of the identity of Zagier \cite{Zagier2012}.

To state one of their results we recall that the (alternating) multiple harmonic sums are multiple variable
generalizations of harmonic sums defined by the following:
for any $r\in \N$ and $\bfs=(s_1, s_2, \ldots, s_r)\in  (\Z^*)^r$
\begin{equation*}
H_n(s_1, s_2, \ldots, s_r)=\sum_{n\ge k_1>k_2>\ldots>k_r\ge 1}\prod_{i=1}^r
\frac{\sgn (s_i)^{k_i}}{k_i^{|s_i|}}.
\end{equation*}
Throughout the paper we will use $\bar{n}$ to denote a negative entry $s_j=-n$. For example, $H(\bar 2,1)=H(-2,1).$
Another kind of sums which is closely
related to the above is the following star version (also denoted by S in the literature)
\begin{equation*}
H^\star_n(s_1, s_2, \ldots, s_r)=\sum_{n\ge k_1\ge k_2 \ge \ldots \ge k_r\ge 1}\prod_{i=1}^r
\frac{\sgn (s_i)^{k_i}}{k_i^{|s_i|}}.
\end{equation*}
We call $l(\bfs):=r$ the depth of this MHS and $|\bfs|:=\sum_{i=1}^r|s_i|$ the weight.
For convenience we set  $H_n(\bfs)=0$ if $n<l(\bfs)$,
$H_n(\emptyset)=H^\star_n(\emptyset)=1$, and $\{s_1, s_2, \ldots, s_j\}^d$
the set formed by repeating the composition $(s_1, s_2, \ldots, s_j)$ $d$ times. When taking
the limit $n\to\infty$ we get the so-called the \emph{(alternating) Euler sum} and
the \emph{(alternating) star Euler sum}, respectively:
\begin{equation}\label{equ:EulerSumDefn}
\zeta(\bfs)=\lim_{n\to\infty} H_n(\bfs),\qquad
\zeta^\star(\bfs)=\lim_{n\to\infty} H^\star_n(\bfs).
\end{equation}
When $\bfs\in\N^\ell$ they are called the \emph{multiple zeta value} (MZV)
and the \emph{multiple zeta star value} (MZSV), respectively.

The following is one of the main results of \cite{HessamiPilehrood2Tauraso2012}.
\begin{thm} \label{thm:PTThm2.1} {\rm (\cite[Thm.~2.1]{HessamiPilehrood2Tauraso2012})}
Let $a$ and $b$ be two non-negative integers. Set
$A_{n,k}=(-1)^{k-1}\binom{n}{k}/\binom{n+k}{k}$ for any positive integers $n$ and $k$.
Then for any integer $c\ge 2$,
\begin{equation*}
H^\star_n(\{2\}^a,c,\{2\}^b)=
2\sum_{k=1}^n \frac{A_{n,k}}{k^{2a+2b+c}}
+4 \sum_{\substack{i+j+|\bfx|=c\\ i\ge 1,j\ge 2,\bfx\in\N^r,r\ge0}}
2^{l(\bfx)}
\sum_{k=1}^n\frac{H_{k-1}(\bfx,i+2b)A_{n,k}}{k^{2a+j}},
\end{equation*}
where in the sum $i\ge 1,j\ge 2$ and $\bfx$ runs through all possible compositions of positive
integers (or the empty set when $r = 0$) satisfying the restriction $i + j + |\bfx| = c$.
\end{thm}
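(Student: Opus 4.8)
The plan is to pass to generating functions in the two string-lengths $a$ and $b$, collapse the dependence on $a$ by a single partial-fraction expansion that produces the coefficients $A_{n,k}$, and then reduce the whole theorem to one telescoping identity in a single variable.

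I begin from the fundamental expansion
\begin{equation*}
\prod_{j=1}^n\frac{1}{1-x/j^2}=2\sum_{k=1}^n\frac{A_{n,k}}{1-x/k^2},
\end{equation*}
whose right side is the partial-fraction decomposition of the left: the residue of $\prod_{j=1}^n j^2/(j^2-x)$ at $x=k^2$ works out to $-2k^2A_{n,k}$. Since $H^\star_n(\{2\}^a)$ is the complete homogeneous symmetric polynomial of degree $a$ in $1/1^2,\dots,1/n^2$, its generating series is exactly the left side, so extracting $[x^a]$ already gives the special case $H^\star_n(\{2\}^a)=2\sum_kA_{n,k}/k^{2a}$ and, in particular, the factors of $2$ that appear in the theorem.

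Next I introduce the bivariate series
\begin{equation*}
G_n(x,y)=\sum_{a,b\ge0}H^\star_n(\{2\}^a,c,\{2\}^b)\,x^ay^b
=\sum_{m=1}^n\frac{1}{m^c}\prod_{j=m}^n\frac{1}{1-x/j^2}\prod_{j=1}^m\frac{1}{1-y/j^2},
\end{equation*}
where $m$ is the summation index attached to the entry $c$, the $a$ twos range over $[m,n]$ and the $b$ twos over $[1,m]$. Writing $\prod_{j=m}^n(1-x/j^2)^{-1}=\prod_{j=1}^n(1-x/j^2)^{-1}\cdot\prod_{j=1}^{m-1}(1-x/j^2)$ and applying the fundamental expansion lets me factor $G_n=\big(2\sum_kA_{n,k}/(1-x/k^2)\big)\,\Phi(x,y)$, with the polynomial-in-$x$ series $\Phi(x,y)=\sum_{m=1}^nm^{-c}\prod_{j=1}^{m-1}(1-x/j^2)\prod_{j=1}^m(1-y/j^2)^{-1}$. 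Every summand of $G_n$ vanishes as $x\to\infty$, so $G_n$ is a proper rational function of $x$ with poles only at $x=k^2$; since $\Phi$ is regular there, summing principal parts gives the exact expansion $G_n(x,y)=2\sum_{k=1}^nA_{n,k}\,\Phi(k^2,y)/(1-x/k^2)$. Because $\prod_{j=1}^{m-1}(1-k^2/j^2)$ vanishes for $m>k$, extracting $[x^ay^b]$ reduces the theorem to the single-variable identity
\begin{equation*}
\sum_{m=1}^k\frac{H^\star_m(\{2\}^b)}{m^c}\prod_{j=1}^{m-1}\Big(1-\frac{k^2}{j^2}\Big)
=\frac{1}{k^{2b+c}}+2\sum_{\substack{i+j+|\bfx|=c\\ i\ge1,\,j\ge2}}\frac{2^{l(\bfx)}}{k^j}\,H_{k-1}(\bfx,i+2b);
\end{equation*}
substituting it into $2\sum_kA_{n,k}k^{-2a}(\cdots)$ reproduces the two sums of the theorem verbatim.

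It remains to prove this last identity, which is the heart of the matter. Summing the left side against $y^b$ and simplifying, its generating function in $b$ becomes $\Phi_c(k,y):=\sum_{m=1}^k u_m\big/\big(m^{c-2}(m^2-y)\big)$, where $u_m:=\prod_{j=1}^{m-1}(j^2-k^2)\big/\prod_{l=1}^{m-1}(l^2-y)$ obeys the telescoping relation $u_m/(m^2-y)=(u_{m+1}-u_m)/(y-k^2)$ together with $u_1=1$ and $u_{k+1}=0$. For $c=2$ this telescopes at once to $\Phi_2(k,y)=(u_{k+1}-u_1)/(y-k^2)=1/(k^2-y)$, whose $[y^b]$ is $k^{-2b-2}$, matching the right side (whose composition sum is then empty). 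For general $c$ I would view $\Phi_c(k,y)$ as a proper rational function of $y$ with simple poles only at $y=q^2$, $q=1,\dots,k$: the pole at $y=k^2$ is fed solely by the term $m=k$ and, using $u_k|_{y=k^2}=1$, has principal part $k^{-c}/(1-y/k^2)$, contributing precisely the isolated term $k^{-(2b+c)}$; the poles at $y=q^2$ with $q<k$ produce the composition sum. The main obstacle is computing these latter residues, into which each $u_m$ feeds through both its explicit factor $1/(m^2-y)$ and the poles of its denominator $\prod_{l<m}(l^2-y)$: one must check that the surviving indices $q<k$ assemble into $H_{k-1}(\bfx,i+2b)$, that the way the weight $c$ is apportioned across the descent records the exponents $j\ge2$ and $i\ge1$ with $i+j+|\bfx|=c$, and that the repeated resolutions deposit exactly the multiplicity $2^{l(\bfx)}$.
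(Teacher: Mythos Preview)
Your generating-function route is genuinely different from the paper's. In the paper this statement is recovered as the $r=1$ case of Theorem~\ref{thm:GenThm2.1}, whose proof is an induction on $r+n$: one expands $H^\star_n(\bfs)$ by peeling off the leading block of $2$'s, applies the inductive hypothesis at $n-1$, sums the resulting geometric series in the block length (which converts $A_{n-1,k}$ into $A_{n,k}$ via $A_{n-1,k}\sum_{l=0}^{a}(n/k)^{2l}=\big((n/k)^{2a}-k^2/n^2\big)A_{n,k}$), and then closes the induction with Lemma~\ref{lem:PTlemma2.1}. Your partial-fraction identity $\prod_{j=1}^n(1-x/j^2)^{-1}=2\sum_kA_{n,k}(1-x/k^2)^{-1}$ handles all values of $a$ simultaneously and eliminates $A_{n,k}$ from the problem entirely, reducing everything to the single identity in $k,b,c$ that you display. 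That reduction is correct, and the $c=2$ telescoping is clean.

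The gap is that you stop exactly where the content lies. For $c\ge3$ you only describe what the residues of $\Phi_c(k,y)$ at $y=q^2$ with $q<k$ ``must'' produce, without computing them; you yourself call this ``the main obstacle.'' It is not routine bookkeeping: each such pole receives contributions from the explicit factor $1/(q^2-y)$ in the $m=q$ term \emph{and} from the denominator of $u_m$ for every $m>q$, and one must show that these reorganize into $H_{k-1}(\bfx,i+2b)$ with the exact constraints $i\ge1$, $j\ge2$, $i+j+|\bfx|=c$ and the multiplicity $2^{l(\bfx)}$. A natural way to finish is to prove your displayed identity by induction on $c$, Abel-summing against the telescoping relation $u_m/(m^2-y)=(u_{m+1}-u_m)/(y-k^2)$; this amounts to an $A_{n,k}$-free analogue of Lemma~\ref{lem:PTlemma2.1} and carries comparable work. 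As written, the argument is a promising outline rather than a proof.
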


In sections \ref{sec:MHS} and \ref{sec:proofMHS} we generalize Theorem~\ref{thm:PTThm2.1} to express
$H^\star_n(\bfs)$ using binomial coefficients where
$\bfs=(\{2\}^{a_1},c_1,\dots,\{2\}^{a_r},c_r,\{2\}^{a_{r+1}})$
with positive integers $c_1,\dots,c_r\ge 2$. Taking limit and
using a key lemma we shall obtain a new family of identities of MZSV
involving alternating Euler sums in the following Theorem (an equivalent
form of Theorem~\ref{thm:GenThm2.1MZSV}).
\begin{thm}\label{thm:IntroGenThm2.1MZSV}
Let $n$ be a positive integer and
$\bfs=(\{2\}^{a_1},c_1,\dots,\{2\}^{a_r},c_r,\{2\}^{a_{r+1}})$
where $a_j, c_j\in\N_0$ and $c_j\ge 3$.
Then  we have
\begin{equation*}
 \zeta^\star(\bfs)= -\sum_{\bfp=(\overline{2a_1+2})\circ 1^{\circ(c_1-3)} \circ (2 a_2+3) \circ 1^{\circ(c_2-3)}
 \circ \dots\circ  (2 a_r+3) \circ 1^{\circ(c_r-3)} \circ (2 a_{r+1}+1)  }2^{\ell(\bfp)}\zeta(\bfp)
\end{equation*}
where $\circ$ is either the comma ``,'' or the O-plus ``$\oplus$'' defined by
$\alpha \oplus \beta=\sgn(\alpha )\sgn(\beta)(|\alpha |+|\beta|)$ for all $\alpha,\beta\in\Z^*$, and $1^{\circ c}=\underbrace{1 \circ \dots\circ1}_{c \ {\rm times}}$.
\end{thm}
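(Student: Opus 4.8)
The plan is to deduce Theorem~\ref{thm:IntroGenThm2.1MZSV} from the finite identity for $H^\star_n(\bfs)$ recorded in Theorem~\ref{thm:GenThm2.1MZSV} (the generalization of Theorem~\ref{thm:PTThm2.1} obtained in the earlier sections) by letting $n\to\infty$ and then repackaging the resulting alternating Euler sums. I would first obtain the finite identity by induction on the number $r$ of interior blocks: Theorem~\ref{thm:PTThm2.1} is precisely the case $r=1$ (with $a=a_1$, $b=a_2$, $c=c_1$), and to pass from $r-1$ to $r$ I would peel off the last pair $(c_r,\{2\}^{a_{r+1}})$ and feed the innermost factor into the single-$c$ mechanism of Theorem~\ref{thm:PTThm2.1}, carrying along at each stage the nested binomial sums $\sum_k H_{k-1}(\cdots)A_{n,k}/k^w$ it produces. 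The output is a finite sum of terms $2^{\#}\sum_k H_{k-1}(\bfx)A_{n,k}/k^w$ indexed by vectors obeying the positivity and weight constraints inherited from the constraint $i+j+|\bfx|=c$ in Theorem~\ref{thm:PTThm2.1}.

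The second step, the content of the key lemma, is the passage to the limit. The only facts about the binomial weights that I need are the elementary bound $|A_{n,k}|=\binom{n}{k}/\binom{n+k}{k}\le 1$ and the pointwise limit $A_{n,k}\to(-1)^{k-1}$ as $n\to\infty$. Since each interior factor $H_{k-1}(\bfx,m)$ grows at most like a power of $\log k$ while every outer exponent $w=2a+j\ge 2$, the summands $H_{k-1}(\bfx,m)A_{n,k}/k^w$ (extended by $0$ for $k>n$) are dominated by the summable sequence $|H_{k-1}(\bfx,m)|/k^w$, so dominated convergence gives
\[
\lim_{n\to\infty}\sum_{k=1}^n\frac{H_{k-1}(\bfx,m)A_{n,k}}{k^w}=\sum_{k=1}^\infty\frac{(-1)^{k-1}H_{k-1}(\bfx,m)}{k^w}=-\zeta(\overline{w},\bfx,m).
\]
The limiting Euler sum always converges because its leading entry $\overline{w}$ is negative. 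In particular the bare term $2\sum_k A_{n,k}/k^{|\bfs|}$ tends to $-2\zeta(\overline{|\bfs|})$, which is exactly the totally merged term of Theorem~\ref{thm:IntroGenThm2.1MZSV}.

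The final step is the combinatorial repackaging, which I would set up as a bijection between the index data surviving in the limit and the assignments of $\circ$ (comma or $\oplus$) to the gaps of the template $(\overline{2a_1+2})\circ 1^{\circ(c_1-3)}\circ(2a_2+3)\circ\cdots\circ(2a_{r+1}+1)$. Reading a composition $\bfp$ off the template, the leading block $\overline{2a_1+2}\oplus 1^{\oplus(j-2)}=\overline{2a_1+j}$ records the choice $j\ge 2$; the trailing block $1^{\oplus(i-1)}\oplus(2a_{r+1}+1)=2a_{r+1}+i$ records $i\ge 1$; each maximal comma-separated run of interior $1$'s of length $t$ contributes the entry $1^{\oplus t}=t$, so the interior entries assemble exactly into a composition $\bfx$ of positive integers, while the heads $2a_j+3$ absorb neighbouring $1$'s in the same manner. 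Under this correspondence the weights match automatically (the template has the same weight $|\bfs|$ as $\bfs$, the constant shift $2-3+1$ cancelling), the leading sign is negative precisely because only the first entry is barred and $\oplus$ multiplies signs, and the powers of two combine to $2^{\ell(\bfp)}$, one factor per comma-separated block matching the $2^{\ell(\bfx)}$ from the interior together with the boundary factors. Checking the model case $r=1$ by hand, where the limit of Theorem~\ref{thm:PTThm2.1} reads $-2\zeta(\overline{2a_1+2a_2+c_1})-4\sum 2^{\ell(\bfx)}\zeta(\overline{2a_1+j},\bfx,i+2a_2)$ and the two families of terms are respectively the totally merged term and all remaining merge patterns, fixes the normalization, and the induction propagates it.

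The main obstacle is this last repackaging in full generality: one must verify that the nested positivity and weight constraints produced by the $r$-fold iteration of Theorem~\ref{thm:PTThm2.1} are in exact bijection with the $2^{g}$ merge patterns of the template ($g$ being the number of gaps), with no pattern conflated or double counted, all while the sign and the exponent of $2$ track correctly through every $\oplus$. Establishing the finite identity for general $r$ is laborious but routine once Theorem~\ref{thm:PTThm2.1} is in hand; the genuinely delicate point is making this bijection, together with the bookkeeping of signs and powers of $2$, completely airtight.
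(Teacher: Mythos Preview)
Your overall strategy coincides with the paper's: start from the finite MHS identity, pass to the limit, then match the resulting alternating Euler sums with the $\circ$-template. A small labeling slip: the finite identity you invoke is Theorem~\ref{thm:GenThm2.1}, not Theorem~\ref{thm:GenThm2.1MZSV} (the latter is already the post-limit MZSV statement).

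Two genuine differences are worth noting. First, your sketched induction on $r$ for the finite identity, ``peeling off $(c_r,\{2\}^{a_{r+1}})$ and feeding it into Theorem~\ref{thm:PTThm2.1}'', is not the route the paper takes and is not obviously workable as stated: Theorem~\ref{thm:PTThm2.1} is an identity for $H^\star_n(\{2\}^a,c,\{2\}^b)$, not a transformation rule for the nested sums $\sum_k H_{k-1}(\cdots)A_{n,k}/k^w$ that your induction would have to propagate. The paper instead inducts on $r+n$, splitting off the outermost index and invoking Lemma~\ref{lem:PTlemma2.1} (the identity governing $\frac{1}{n^c}\sum_k H_{k-1}(\bfv)A_{n,k}/k^a$) to close the recursion. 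Since you are ultimately taking Theorem~\ref{thm:GenThm2.1} as input anyway, this does not affect the proof of Theorem~\ref{thm:IntroGenThm2.1MZSV} proper, but your description of how the finite identity is obtained should be amended.

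Second, your limit argument is actually cleaner than the paper's. The paper proves Lemma~\ref{lem:limitBound} via an explicit bound $1-\binom{n}{k}/\binom{n+k}{k}\le 2k^2/n$ and a tail-splitting estimate; your observation that $|A_{n,k}|\le 1$, $A_{n,k}\to(-1)^{k-1}$ pointwise, and $|H_{k-1}(\bfv)|/k^w$ with $w\ge 2$ is summable already gives the limit by dominated convergence, and this is entirely sufficient (and answers the concern raised in the paper's remark after Lemma~\ref{lem:limitBound}). Finally, the paper merely asserts that Theorem~\ref{thm:IntroGenThm2.1MZSV} and Theorem~\ref{thm:GenThm2.1MZSV} are equivalent without spelling out the bijection; your sketch of how the $\oplus/{,}$ choices record $(j,\bfx,i)$ and how the power of $2$ becomes $2^{\ell(\bfp)}$ is a useful addition, and your identification of this bookkeeping as the delicate point is accurate.
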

For example,
\begin{align*}
\zeta^\star(\{2\}^7,3,\{2\}^2,3,\{2\}^3)=&-8\zeta(\ol{16},7,7)
    -4\zeta(\ol{23},7)-4\zeta(\ol{16},14) -2\zeta(\ol{30}),\\
\zeta^\star(\{2\}^3,3,\{2\}^2,4,\{2\}^5)=&-16\zeta(\ol{8},7,1,11)-8\zeta(\ol{8},7,12)
    -8\zeta(\ol{8},8,11)\\
-8\zeta(\ol{15},1,11)&-4\zeta(\ol{15},12)-4\zeta(\ol{16},11)-4\zeta(\ol{8},19)-2\zeta(\ol{27}).
\end{align*}
Broadhurst considered alternating Euler sums as ``honorary''
MZVs and relates them to the study of knots and Feynman diagrams
(see \cite{Broadhurst1996}. From the point of view of algebraic geometry,
Goncharov and Manin \cite{GoncharovMa2004} showed that the MZVs and MZSVs are periods of mixed Tate motives over $\Z$
(see also \cite{Brown2012}) while Deligne and Goncharov \cite{DeligneGo2005} proved that
the alternating Euler sums are periods of mixed Tate motives over $\Z$ ramified at 2
(see also \cite{Deligne2010}). This provides a kind of inverse to ``Galois descent' since the linear
combinations of the Euler sums in the above examples are invariant under a certain motivic Galois group.
In general it is not easy to show this unramifiedness property directly. Another important aspect of
Theorem~\ref{thm:IntroGenThm2.1MZSV} is that the depth of the MZSV
can be very large when the 2-strings are very long, but
the largest depth of the Euler sums on the right hand side depends only on the number of
such strings but not on their lengths.

The ideas used in this paper have been applied to obtain similar results for other
types of strings in \cite{Zhao2013}. In particular, the conjectural Two-one formula
of Ohno and Zudlin is proved there. Its $q$-analog has been obtained by Hessami Pilehroods recently in \cite{HessamiPilehrood22013}.
In \cite{HessamiPilehrood2Zhao2013} Hessami Pilehroods and the second author further prove a $q$-analog of the main result of this paper (Theorem~\ref{thm:GenThm2.1}) and some similar results for other types of strings contained in \cite{Zhao2013}.

\section{Identities for multiple harmonic sums}\label{sec:MHS}

The following lemma is the special case of \cite[Lemma 2.2]{HessamiPilehrood2Tauraso2012} when $m=2$ and $c_n^{(2)}=n!^2/(2n)!$.

\begin{lem} \label{lem:PTlemma2.1}
For any positive integers $n$ and $k$ define $A_{n,k}=(-1)^{k-1}\binom{n}{k}/\binom{n+k}{k}$.
Then for every $c\in \N$ and
$\bfv\in \N^t$ ($t\ge 0$) we have
\begin{equation*}
\frac{1}{n^c}\sum_{k=1}^n \frac{H_{k-1}(\bfv)A_{n,k}}{k^a}
=\sum_{k=1}^n \frac{H_{k-1}(\bfv)A_{n,k}}{k^{a+c}}
+\sum_{\substack{j+|\bfx|=a+c\\ j\ge 0, x_r>a}} 2^{l(\bfx)}
\sum_{k=1}^n\frac{H_{k-1}(\bfx,\bfv)A_{n,k}}{k^{j}}
\end{equation*}
where $\bfx=(x_1,x_2,\dots,x_r)\in \mathbb{N}^r$ for $r\ge 0$, and $|\bfx|=\sum_{i=1}^r x_i$, $l(\bfx)=r$.
Here $|\bfx|=0$ if $r=0$ which implies that $\bfx=\emptyset$.
\end{lem}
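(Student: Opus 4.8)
The plan is to reduce the whole statement to a single elementary identity for the coefficients $A_{n,k}$ and then iterate that identity $c$ times, keeping careful track of the combinatorics. First I would record the basic recurrence: computing the ratio directly from $A_{n,k}=(-1)^{k-1}(n!)^2/\big((n-k)!(n+k)!\big)$ gives $A_{n,k+1}/A_{n,k}=-(n-k)/(n+k+1)$, so that $(n+k+1)A_{n,k+1}=(k-n)A_{n,k}$ for $1\le k<n$. From this I claim the fundamental identity
\[
\frac{k}{n}\,A_{n,k}=A_{n,k}+2\sum_{j=k+1}^n A_{n,j},\qquad 1\le k\le n.
\]
To prove it, set $S_k=2\sum_{j=k+1}^n A_{n,j}$ and observe that $S_k$ and $\tfrac{k-n}{n}A_{n,k}$ agree at $k=n$ (both are $0$) and satisfy the same recurrence $S_k-S_{k+1}=2A_{n,k+1}$, the latter being a one-line consequence of $(n+k+1)A_{n,k+1}=(k-n)A_{n,k}$; downward induction on $k$ then finishes it.

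Second, I would establish the atomic case $c=1$, uniformly for every exponent $a\ge 0$ and every string $\bfw\in\N^s$. Writing $\tfrac1n\cdot\tfrac{A_{n,k}}{k^a}=\tfrac1{k^{a+1}}\cdot\tfrac{k\,A_{n,k}}{n}$ and substituting the fundamental identity yields
\[
\frac1n\sum_{k=1}^n\frac{A_{n,k}}{k^a}H_{k-1}(\bfw)=\sum_{k=1}^n\frac{A_{n,k}}{k^{a+1}}H_{k-1}(\bfw)+2\sum_{k=1}^n\frac{H_{k-1}(\bfw)}{k^{a+1}}\sum_{j=k+1}^n A_{n,j}.
\]
Interchanging the order of summation in the last term and using $\sum_{k=1}^{j-1}H_{k-1}(\bfw)/k^{a+1}=H_{j-1}(a+1,\bfw)$ collapses it to $2\sum_{j} A_{n,j}H_{j-1}(a+1,\bfw)$. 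Thus one application of the identity offers a binary choice on a term $\tfrac1n\sum_k A_{n,k}k^{-b}H_{k-1}(\bfw)$: either \emph{bump} the exponent $b\mapsto b+1$ leaving the string fixed, or \emph{prepend} $b+1$ to the string (with a factor $2$) and reset the exponent to $0$.

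Finally, I would induct on $c$, uniformly in $a$ and $\bfv$, the base case $c=0$ being trivial since then the index constraint $x_r>a\ge|\bfx|$ cannot be met and the string-sum is empty. For the inductive step one peels off a single factor $1/n$ and applies the atomic identity to every term produced. A length-$c$ sequence of the binary choices above is encoded by a path, and $2^{l(\bfx)}$ records the number of prepend-moves. The substance of the proof is to verify that this path model is a genuine bijection onto the claimed index set: the first entry ever prepended ends up as the last entry $x_r$ and is forced to exceed the original $a$ (giving $x_r>a$), each later prepended entry is $\ge 1$ (so all remaining constraints $x_i\ge 1$ are automatic from $\bfx\in\N^r$), the terminal data satisfy $j+|\bfx|=a+c$ with $j\ge 0$, and---crucially---each admissible pair $(\bfx,j)$ is produced by exactly one path, so its coefficient is precisely $2^{l(\bfx)}$. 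I expect this last point to be the main obstacle: showing the correspondence neither overcounts nor omits terms, and that the single surviving constraint $x_r>a$ is inherited correctly through the iteration of resets. Matching coefficients term-by-term against the inductive hypothesis then closes the argument.
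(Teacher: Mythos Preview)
Your argument is correct. The paper itself does not supply a proof of this lemma; it simply records it as the $m=2$ special case of \cite[Lemma~2.2]{HessamiPilehrood2Tauraso2012}. Your self-contained derivation---the partial-sum identity $\tfrac{k}{n}A_{n,k}=A_{n,k}+2\sum_{j>k}A_{n,j}$, the resulting binary bump/prepend step, and the path bijection---is in fact essentially how that cited lemma is proved there, specialized to the sequence $c_n^{(2)}=n!^2/(2n)!$. The bijection check is the heart of the matter and you have it right: reading a path backwards from its final $j$ bumps, the prepended entries are recovered as $x_1,\dots,x_r$ with $x_r-1-a$ initial bumps, forcing exactly $x_r>a$ and no further constraints beyond $x_i\ge 1$; the move count $j+(|\bfx|-r-a)+r=c$ confirms uniqueness. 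One cosmetic point: since the lemma is stated for $c\in\N$, the natural base case is $c=1$ (which you prove in full), though your $c=0$ observation is harmless and makes the induction marginally cleaner.
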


\begin{defn}\label{defn:notation}
Suppose $\bfs=(\{2\}^{a_1},c_1,\dots,\{2\}^{a_r},c_r,\{2\}^{a_{r+1}})$.
We define two kinds of string operations on the \emph{condensation}
$\tbfs:=(2 a_1,c_1,\dots,2 a_r,c_r,2 a_{r+1})$ of $\bfs$ as follows.
\begin{itemize}
  \item A \emph{merge} $\mu_t$ for some $1\le t\le r$ changes the symbols
``$,c_t,$'' to ``$+c_t+$'',
  \item A \emph{substitution} $\gs_t$ changes the symbols
``$,c_t,$'' to ``$+j_t, \bfx_t, i_t+$''.
\end{itemize}
Let $I=\{i_1,\dots,i_t\}$ be a subset of $[r]:=\{1,\dots,r\}$. Define
\begin{itemize}
  \item The merge operation $\mu_I=\mu_{i_1}\circ\cdots \circ\mu_{i_t}$ and similarly for substitutions $\gs_I$,
  \item The complement $\oI=[r]\setminus I$,
  \item The combined operation $\gk_I=\gs_\oI\circ\mu_I$.
\end{itemize}
For any composition $(e_1,\dots,e_r)$ of natural numbers we denote
\begin{itemize}
  \item The first component $e_1=\gf(\bfe)$
  \item The truncation operator $\Trunc(e_1,\dots,e_r)=(e_2,\dots,e_r)$.
  \item The negation operator $\nu(e_1,\dots,e_r)=(\ol{e_1},e_2,\dots,e_r)$.
\end{itemize}
Notice that if $r=1$ then $\Trunc(e_1)=\emptyset$.
\end{defn}

\begin{thm} \label{thm:GenThm2.1}
Let $n$ be a positive integer and
$\bfs=(\{2\}^{a_1},c_1,\dots,\{2\}^{a_r},c_r,\{2\}^{a_{r+1}})$
where $a_j, c_j\in\N_0$ and $c_j\ge 2$. Set $A_{n,k}=(-1)^{k-1}\binom{n}{k}/\binom{n+k}{k}$.
Then
\begin{equation}\label{equ:GenMain}
 H^\star_n(\bfs)=  2\sum_{I\subseteq [r]}\
\sum_{\substack{\ i_t+j_t+|\bfx_t|=c_t, \\ i_t\ge 1,j_t\ge 2 \ \forall t\not\in I }}
 2^{|\oI|+\underset{t\not\in I}{\sum} l(\bfx_t)} \sum_{k=1}^n\frac{H_{k-1}\Big(\Trunc\circ\gk_I(\tbfs) \Big)A_{n,k}}
 {k^{\gf\circ\gk_I(\tbfs)} },
\end{equation}
where $\tbfs=(2 a_1,c_1,\dots,2 a_r,c_r,2 a_{r+1})$ is the condensation of $\bfs$.
\end{thm}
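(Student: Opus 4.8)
The plan is to prove the identity by iterating Lemma~\ref{lem:PTlemma2.1}, processing the blocks of $\bfs$ one letter $c_t$ at a time, with the base case supplied by the classical 2-string evaluation $H^\star_n(\{2\}^m)=2\sum_{k=1}^n A_{n,k}/k^{2m}$ (the case $r=0$) and by Theorem~\ref{thm:PTThm2.1} (the case $r=1$, which one checks is exactly \eqref{equ:GenMain} after unwinding the operators: $I=[1]$ gives the ``merge'' term $2\sum_k A_{n,k}/k^{2a_1+c_1+2a_2}$, and $I=\emptyset$ gives the ``substitute'' terms). The governing principle is that the sum over $I\subseteq[r]$ records $r$ independent binary choices, one per letter $c_t$, and that each such choice is precisely the dichotomy between the two terms produced by one application of Lemma~\ref{lem:PTlemma2.1}: its first output term (which raises the exponent $a\mapsto a+c$) realizes the merge $\mu_t$, while its second output term (which prepends a new composition $\bfx$ to the argument of $H_{k-1}$) realizes the substitution $\gs_t$.

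Concretely, I would work with the binomial sums $T_n(\bfv;a):=\sum_{k=1}^n H_{k-1}(\bfv)A_{n,k}/k^a$ and track how incorporating one block $\{2\}^{a_t},c_t$ transforms a given $T_n(\bfv;a)$ via Lemma~\ref{lem:PTlemma2.1} applied with the \emph{combined} shift $c=2a_t+c_t$ (the $2a_t$ accounting for the 2-string, the $c_t$ for the separating letter). The merge branch then lands on $T_n(\bfv;a+2a_t+c_t)$, matching the fusion of the condensation entries $2a_t,c_t,2a_{t+1}$ under $\mu_t$. In the substitute branch, writing the forced last entry of $\bfx$ as $x_{\text{last}}=i_t+a$, the Lemma's constraint $x_r>a$ becomes exactly $i_t\ge 1$, the relation $j+|\bfx|=a+2a_t+c_t$ becomes $j_t+|\bfx_t|+i_t=c_t$ after setting $j_t:=j-2a_t$, and the new exponent $j=2a_t+j_t$ is carried to the next step, where prepending the following block absorbs it as the entry $i_{t-1}+2a_t+j_t$ of $\gk_I(\tbfs)$. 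Reading off $\gf$ (the first component, i.e.\ the surviving exponent) and $\Trunc$ (the remaining entries, i.e.\ the argument of $H_{k-1}$), together with the accumulation of the powers of $2$ into $2^{|\oI|+\sum_{t\notin I}l(\bfx_t)}$ and the overall factor $2$ from the base case, reproduces \eqref{equ:GenMain}.

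I expect the main obstacle to be twofold. First, one must pin down the correct \emph{seed} relation expressing $H^\star_n(\bfs)$ as a combination of $\tfrac{1}{n^{\,\bullet}}$-weighted binomial sums on which Lemma~\ref{lem:PTlemma2.1} can act: the 2-strings $\{2\}^{a_t}$ are \emph{shared} between adjacent blocks, so the nested-sum definition of the star MHS must first be converted into the multiplicative $A_{n,k}$-form, and the running exponent must be threaded correctly through successive blocks. Second, and more delicate, is reconciling the constraint ranges: a single application of Lemma~\ref{lem:PTlemma2.1} produces all $j\ge 0$ (hence $j_t\ge -2a_t$), whereas \eqref{equ:GenMain} restricts to $j_t\ge 2$. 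The boundary terms with $j_t\le 1$ must be shown to cancel, or to be reabsorbed into merge terms of the adjacent block, across the $I$-sum; controlling this interaction between consecutive applications, and verifying that exactly the constraints $i_t\ge 1$, $j_t\ge 2$, $i_t+j_t+|\bfx_t|=c_t$ survive with the stated multiplicities, is the technical heart of the argument.
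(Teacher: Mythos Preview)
Your plan differs from the paper's proof in a way that leaves a genuine gap. The paper does \emph{not} iterate Lemma~\ref{lem:PTlemma2.1} once per block; it runs a single induction on $r+n$, using the elementary one-step recursion
\[
H^\star_n(\bfs)=\sum_{l=0}^{a_1} n^{-2(a_1-l)}H^\star_{n-1}(\bft_l)+n^{-(2a_1+c_1)}H^\star_n(\bfu),
\]
applies the induction hypothesis to both pieces, and then collapses the $l$-sum with the geometric identity
\[
A_{n-1,k}\sum_{l=0}^{a_1}(n/k)^{2l}=\Bigl(\tfrac{n^{2a_1}}{k^{2a_1}}-\tfrac{k^2}{n^2}\Bigr)A_{n,k}.
\]
After this, \emph{one} application of Lemma~\ref{lem:PTlemma2.1} with $c=c_1-2$ (not $2a_1+c_1$) kills the leftover terms; in particular $j\ge0$ in the Lemma becomes exactly $j_1\ge2$, so the boundary cancellation you anticipate never has to be carried out.

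The obstacle you label ``first'' is in fact the missing idea. Lemma~\ref{lem:PTlemma2.1} consumes an explicit prefactor $n^{-c}$ on its left-hand side, but there is no identity of the shape $H^\star_n(\{2\}^{a_1},c_1,\bfu)=n^{-(2a_1+c_1)}\cdot(\text{RHS of \eqref{equ:GenMain} for }\bfu)$ for fixed $n$: the natural recursion supplies that term \emph{together with} terms in $H^\star_{n-1}$, which forces $n$ into the induction and brings in the $A_{n-1,k}\leadsto A_{n,k}$ passage above. A pure induction on $r$ that ``prepends one block via Lemma~\ref{lem:PTlemma2.1} with $c=2a_t+c_t$'' therefore cannot get started. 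And with that choice of $c$ the Lemma would output all $j\ge0$, i.e.\ $j_t\ge -2a_t$, leaving a genuine surplus of terms; in the paper's argument these never arise because the $2a_t$ portion of the shift is absorbed by the geometric sum, leaving only $c=c_t-2$ for the Lemma.
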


\begin{eg}
When $r=1$ we have two possible subsets of $[1]$: $I=\emptyset$ and $I=[1]$.
our Theorem~\ref{thm:GenThm2.1} becomes Theorem~\ref{thm:PTThm2.1}. When $r=2$ we get the following:
For a positive integer $n$ and non-negative integers $a_1, a_2, a_3$ and
and positive integers $c_1, c_2\ge 2$ we set $\bfs=(\{2\}^{a_1},c_1,(\{2\}^{a_2}, c_2,(\{2\}^{a_3})$. Then
\begin{align*}
\gk_{[2]} =   (2a_1 + c_1 +2a_2 + c_2 +2a_3),\quad
\gk_\emptyset =   (2a_1 + j_1, \bfx_1, i_1 +2a_2 + j_2, \bfx_2, i_2 +2a_3), \\
\gk_{\{1\}}=(2a_1 + c_1 +2a_2 + j_2, \bfx_2, i_2 +2a_3), \quad
\gk_{\{2\}}=(2a_1 + j_1, \bfx_1, i_1,2a_2 + c_2 +2a_3).
\end{align*}
Hence by Theorem~\ref{thm:GenThm2.1}
\begin{align*}
H^\star_n(\bfs) &  = 2\sum_{k=1}^n \frac{A_{n,k}}{k^{2a_1+c_1+2a_2+c_2+2a_3}}
+4\sum_{\substack{i+j+|\bfx|=c_2\\ i\ge 1,j\ge 2 }} 2^{l(\bfx)}
\sum_{k=1}^n\frac{H_{k-1}(\bfx,i+2a_3)A_{n,k}}{k^{2a_1+c_1+2a_2+j}}     \\
&+ 8\sum_{\substack{ i_1+j_1+|\bfx_1|=c_1,i_2+j_2+|\bfx_2|=c_2 \\ i_1,i_2\ge 1,j_1,j_2\ge 2}} 2^{l(\bfx_1)+l(\bfx_2)}
\sum_{k=1}^n\frac{H_{k-1}(\bfx_1,i_1+2a_2+j_2,\bfx_2,i_2+2a_3)A_{n,k}}{k^{2a_1+j_1}}  \\
&  +4\sum_{\substack{i+j+|\bfx|=c_1\\ i\ge 1,j\ge 2}}2^{l(\bfx)}
\sum_{k=1}^n\frac{H_{k-1}(\bfx,i+2a_2+c_2+2a_3)A_{n,k}}{k^{2a_1+j}} .
\end{align*}
\end{eg}

\section{Proof of Theorem \ref{thm:GenThm2.1}}\label{sec:proofMHS}
We proceed by induction on the sum $r + n$. Since $r \ge 0, n \ge 1$, our base case is $r + n = 1$
which implies $H^\star_1(\{2\}^a) = 1$, and therefore the formula is true.
Suppose the statement is true when $r + n =k-1$, and let $r + n = k$. Then by definition
\begin{equation*}
H^\star_n(\bfs) = \sum_{l=0}^{a_1} \frac {1}{n^{2(a_1-l)}} H^\star_{n-1}(\bft_l)
+ \frac {1}{n^{2a_1+ c_1}} H^\star_n(\bfu),
\end{equation*}
where $\bft_l=(\{2\}^l, c_1, \{2\}^{a_2},c_2, \dots,c_r, \{2\}^{a_{r+1}})$ and
$\bfu=(\{2\}^{a_2}, c_2, \dots, c_{r}, \{2\}^{a_{r+1}})$. We now set their condensations
as follows:
\begin{equation*}
\tbft_l=(2l, c_1, 2a_2,c_2, \dots,c_r,2a_{r+1}),
\quad \tbfu=(2a_2, c_2, \dots, c_{r}, 2a_{r+1}).
\end{equation*}
By inductive hypothesis,
\begin{align*}
H^\star_n(\textbf{s})=&
 \sum_{l=0}^{a_1} \frac {2}{n^{2(a_1-l)}} \sum_{I\subseteq [r]}\
 \sum_{\substack{\ i_t+j_t+|\bfx_t|=c_t, \\ i_t \ge 1, j_t\ge 2 \ \forall t\not\in I }} 2^{|\oI|+
 \underset{t\not\in I}{\sum} l(\bfx_t)}
 \sum_{k=1}^{n-1}\frac{H_{k-1}\Big(\Trunc\circ\gk_I(\tbft_l)\Big)A_{n-1,k}}
 {k^{\gf\circ\gk_I(\tbft_l)}  }\\
+&  \frac {2}{n^{2a_1+ c_1}} \sum_{I\subseteq [r]\setminus\{1\}}\
 \sum_{\substack{\ i_t+j_t+|\bfx_t|=c_t, \\ i_t \ge 1 ,j_t\ge 2 \ \forall t\not\in I }}
 2^{|\oI|+\underset{t\not\in I}{\sum} l(\bfx_t)}
 \sum_{k=1}^n\frac{H_{k-1}\Big(\Trunc\circ\gk_I(\tbfu)\Big)A_{n,k}}
 {k^{\gf\circ\gk_I(\tbfu)} }.
\end{align*}
Changing the order of summation and summing the inner sum we get
\begin{equation*}
A_{n-1,k} \sum_{l=0}^{a_1} \frac{n^{2l}}{k^{2l}} = (-1)^{k-1} \frac{n^{2a_1+ 2} - k^{2a_1+ 2}}{(n^2 - k^2)k^{2a_1}} \frac {\binom{n-1}{k}}{\binom{n-1+k}{k}} = \left (\frac{n^{2a_1}}{k^{2a_1}} - \frac{k^2}{n^2} \right )A_{n,k} .
\end{equation*}
Thus
{\allowdisplaybreaks
\begin{align*}
 H^\star_n(\bfs) =&2 \sum_{I\subseteq [r]}\
 \sum_{\substack{\ i_t+j_t+|\bfx_t|=c_t, \\ i_t\ge 1 ,j_t\ge 2 \ \forall t\not\in I }}
 2^{|\oI|+\underset{t\not\in I}{\sum} l(\bfx_t)}
 \sum_{k=1}^{n}\frac{H_{k-1}\Big(\Trunc\circ\gk_I(\tbft_l)\Big)A_{n,k}}
 {k^{\gf\circ\gk_I(\tbft_l) - 2l+ 2a_1} }\\
-&\frac{2}{n^{2a_1+2}} \sum_{I\subseteq [r]}\
 \sum_{\substack{\ i_t+j_t+|\bfx_t|=c_t, \\ i_t \ge 1,j_t\ge 2 \ \forall t\not\in I }}
 2^{|\oI|+\underset{t\not\in I}{\sum} l(\bfx_t)}
 \sum_{k=1}^{n}\frac{H_{k-1}\Big(\Trunc\circ\gk_I(\tbft_l)\Big)A_{n,k}}
 {k^{\gf\circ\gk_I(\tbft_l) - 2l - 2} }\\
+& \frac {2}{n^{2a_1+c_1}} \sum_{I\subseteq [r]\setminus\{1\}}\
 \sum_{\substack{\ i_t+j_t+|\bfx_t|=c_t, \\ i_t\ge 1,j_t\ge 2 \ \forall t\not\in I }}
 2^{|\oI|+\underset{t\not\in I}{\sum} l(\bfx_t)}
 \sum_{k=1}^{n}\frac{H_{k-1}\Big(\Trunc\circ\gk_I(\tbfu)\Big)A_{n,k}}
 {k^{\gf\circ\gk_I(\tbfu)} }.
\end{align*}}
Since $\tbft_l$ coincides with $\tbfs$ everywhere except for the first component, $\Trunc\circ\gk_I(\tbft_l) =
\Trunc\circ\gk_I(\tbfu)$. Similarly, $\gf\circ\gk_I(\tbft_l) - 2l+ 2a_1= \gf\circ\gk_I(\tbfu)$.
Hence, by induction we only need to show the following quantity vanishes:
{\allowdisplaybreaks
\begin{align}
 &\frac {2}{n^{2a_1+c_1}} \sum_{I\subseteq [r]\setminus\{1\}}\
 \sum_{\substack{\ i_t+j_t+|\bfx_t|=c_t, \\ i_t\ge 1,j_t\ge 2 \ \forall t\not\in I }}
 2^{|\oI|+\underset{t\not\in I}{\sum} l(\bfx_t)}
 \sum_{k=1}^{n}\frac{H_{k-1}\Big(\Trunc\circ\gk_I(\tbfu)\Big)A_{n,k}}
 {k^{\gf\circ\gk_I(\tbfu)} }\notag\\
-& \frac{2}{n^{2a_1+2}} \sum_{I\subseteq [r]}\
 \sum_{\substack{\ i_t+j_t+|\bfx_t|=c_t, \\ i_t \ge 1,j_t\ge 2 \ \forall t\not\in I }}
 2^{|\oI|+\underset{t\not\in I}{\sum} l(\bfx_t)}
 \sum_{k=1}^{n}\frac{H_{k-1}\Big(\Trunc\circ\gk_I(\tbft_l)\Big)A_{n,k}}
 {k^{\gf\circ\gk_I(\tbft_l) - 2l - 2} }\notag\\
=& \frac {2}{n^{2a_1+c_1}} \sum_{I\subseteq [r]\setminus\{1\}}\
 \sum_{\substack{\ i_t+j_t+|\bfx_t|=c_t, \\ i_t\ge 1,j_t\ge 2 \ \forall t\not\in I }}
 2^{|\oI|+\underset{t\not\in I}{\sum} l(\bfx_t)}
 \sum_{k=1}^{n}\frac{H_{k-1}\Big(\Trunc\circ\gk_I(\tbfu)\Big)A_{n,k}}
 {k^{\gf\circ\gk_I(\tbfu)} }\notag\\
-& \frac{2}{n^{2a_1+2}} \sum_{I\subseteq [r]\setminus\{1\}}\
 \sum_{\substack{\ i_t+j_t+|\bfx_t|=c_t, \\ i_t \ge 1,j_t\ge 2 \ \forall t\not\in I }}
 2^{|\oI|+\underset{t\not\in I}{\sum} l(\bfx_t)}
 \sum_{k=1}^{n}\frac{H_{k-1}\Big(\Trunc\circ\gk_I(\tbft_l)\Big)A_{n,k}}
 {k^{\gf\circ\gk_I(\tbft_l) - 2l - 2} }\notag\\
-& \frac{4}{n^{2a_1+2}} \sum_{I\subseteq [r]\setminus\{1\}}\
 \sum_{\substack{\ i_t+j_t+|\bfx_t|=c_t, \\ i_t\ge 1,j_t\ge 2 \ \forall t\not\in I }}
 2^{|\oI|+\underset{t\not\in I}{\sum} l(\bfx_t)}
 \sum_{k=1}^{n}\frac{H_{k-1}\Big(\Trunc\circ\gk_{I\cup\{1\}}(\tbft_l)\Big)A_{n,k}}
 {k^{\gf\circ\mu_{\bI}\circ\gs_{I \cup \{1\}}(\tbft_l) - 2l - 2} } \notag\\
=&\frac{2}{n^{2a_1+2}} \sum_{I\subseteq [r]\setminus\{1\}}\
 \sum_{\substack{\ i_t+j_t+|\bfx_t|=c_t, \\ i_t\ge 1,j_t\ge 2 \ \forall t\not\in I }}
 2^{|\oI|+\underset{t\not\in I}{\sum} l(\bfx_t)}
 \Bigg( \frac{1}{n^{c_1-2}} \sum_{k=1}^{n}\frac{H_{k-1}\Big(\Trunc\circ\gk_I(\tbfu)\Big)A_{n,k}} {k^{\gf\circ\gk_I(\tbfu)} }\notag\\
-&  2\sum_{\substack{\ i_1+j_1+|\bfx_1|=c_1, \\ i_1 \ge 1,j_1\ge 2 }} 2^{l(\bfx_1)}
\sum_{k=1}^{n}\frac{H_{k-1}\Big(\Trunc\circ\gk_I(\tbft_l)\Big)A_{n,k}}
 {k^{\gf\circ\gk_I(\tbft_l) - 2l - 2} }
 - \sum_{k=1}^{n}\frac{H_{k-1}\Big(\Trunc\circ\gk_{I\cup\{1\}}(\tbft_l)\Big)A_{n,k}}
 {k^{\gf\circ\gk_{I \cup \{1\}}(\tbft_l) - 2l - 2} } \Bigg) . \label{equ:last3terms}
\end{align}}
Note that if $1\not\in I$ then $\Trunc\circ\gk_I(\tbfu) = \Trunc\circ\gk_{I \cup \{1\}}(\tbft_l)$, which we will now denote by $\bfv$. Furthermore, $\gf\circ\gk_{I \cup \{1\}}(\tbft_l)=\gf\circ\gk_I(\tbfu)+c_1+2l$. Let $a = \gf\circ\gk_I(\tbfu)$. Then
\begin{equation*}
\gk_I(\tbft_l) =\gs_{\oI}\circ\mu_I(\tbft_l)=\gs_1(2l,c_1,a,\dots)=(j_1+2l,\bfx_1, i_1+a,\dots).
\end{equation*}
Set $\bfw=(\bfx_1, i_1+a)$. Then we see that the three sums inside the parenthesis \eqref{equ:last3terms}
can be rewritten as
\begin{multline*}
 \frac{1}{n^{c_1-2}} \sum_{k=1}^{n}\frac{H_{k-1}(\bfv)A_{n,k}} {k^a }
- \sum_{\substack{\ j_1+|\bfw|=c_1+a_1, \\ j_1\ge 2, w_p > a_1 }} 2^{l(\bfw)}
 \sum_{k=1}^{n}\frac{H_{k-1} (\bfw, \bfv)A_{n,k}}
 {k^{j_1-2} }
 - \sum_{k=1}^{n}\frac{H_{k-1}(\bfv)A_{n,k}}
 {k^{a + c_1 -2} },
\end{multline*}
where $w_p=i+a_1$ ($i\ge 1$) is the last component of $\bfw$. This last expression vanishes by taking $c=c_1-2$
and $j=j_1-2$ in Lemma \ref{lem:PTlemma2.1}.
We have now completed the proof of Theorem~\ref{thm:GenThm2.1}.

\section{The key lemma and an identity family of MZSV}\label{sec:MZSV}
In \cite{HessamiPilehrood2Tauraso2012}, using the corresponding identities of MHS
Hessami Pilehrood et al.\ find some new families of identities of MZSV
and a new proof of the identity of Zagier \cite{Zagier2012}.
We can similarly use Theorem~\ref{thm:GenThm2.1} to obtain a new family of MZSV as follows.
\begin{thm}\label{thm:GenThm2.1MZSV}
Let $n$ be a positive integer and
$\bfs=(\{2\}^{a_1},c_1,\dots,\{2\}^{a_r},c_r,\{2\}^{a_{r+1}})$
where $a_j, c_j\in\N_0$ and $c_j\ge 2$. Set
$\tbfs=(2 a_1,c_1,\dots,2 a_r,c_r,2 a_{r+1})$.
Then with notation given by Definition~\ref{defn:notation} we have
\begin{equation}\label{equ:GenMainMZSV}
 \zeta^\star(\bfs)=   2\sum_{I\subseteq [r]}\
\sum_{\substack{\ i_t+j_t+|\bfx_t|=c_t, \\ i_t\ge 1,j_t\ge 2 \ \forall t\not\in I }}
 2^{|\oI|+\underset{t\not\in I}{\sum} l(\bfx_t)} \zeta\Big(\nu\circ\gk_I(\tbfs) \Big).
\end{equation}
\end{thm}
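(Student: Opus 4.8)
The plan is to derive \eqref{equ:GenMainMZSV} by letting $n\to\infty$ in the finite identity \eqref{equ:GenMain} of Theorem~\ref{thm:GenThm2.1}. Because $\bfs$ begins either with a $2$ (if $a_1\ge 1$) or with $c_1\ge 2$ (if $a_1=0$), the argument is admissible and the left-hand side converges to $\zeta^\star(\bfs)$ by \eqref{equ:EulerSumDefn}. On the right-hand side the sum over $I\subseteq[r]$ and the inner sums over the compositions $(\bfx_t,i_t,j_t)$ with $i_t+j_t+|\bfx_t|=c_t$ run over a \emph{finite} index set that is independent of $n$. Thus it suffices to pass to the limit inside this finite sum, i.e.\ to evaluate, for each fixed $I$ and each fixed choice of compositions, the limit of the single inner sum
\[
\sum_{k=1}^n \frac{H_{k-1}\bigl(\Trunc\circ\gk_I(\tbfs)\bigr)A_{n,k}}{k^{\gf\circ\gk_I(\tbfs)}},
\]
and then to reassemble the pieces with their prefactors $2^{|\oI|+\sum_{t\notin I}l(\bfx_t)}$.

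The heart of the argument is an evaluation I would isolate as the key lemma. Writing $a=\gf\circ\gk_I(\tbfs)$ and $\bfw=\Trunc\circ\gk_I(\tbfs)$, the claim is
\[
\lim_{n\to\infty}\sum_{k=1}^n \frac{H_{k-1}(\bfw)A_{n,k}}{k^{a}}
=\sum_{k=1}^\infty \frac{(-1)^{k-1}H_{k-1}(\bfw)}{k^{a}}
=-\zeta(\ol{a},\bfw).
\]
For the first equality I would factor $\binom{n}{k}/\binom{n+k}{k}=\prod_{i=1}^{k}\frac{n-i+1}{n+i}$; each factor lies in $(0,1]$, so $|A_{n,k}|\le 1$ for all $n,k$ while $A_{n,k}\to(-1)^{k-1}$ as $n\to\infty$ for every fixed $k$. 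Regarding the finite sum as a sum over all $k\ge 1$ whose terms vanish for $k>n$ (consistent with $\binom{n}{k}=0$), the summand is dominated by $H_{k-1}(\bfw)/k^{a}$; this is summable because the leading exponent satisfies $a\ge 2$ in every case (a merge at position $1$ forces $a\ge 2a_1+c_1\ge 2$, a substitution at position $1$ forces $a=2a_1+j_1\ge 2$), and $H_{k-1}(\bfw)$ grows only polylogarithmically in $k$. Dominated convergence then yields the interchange. The second equality is immediate from \eqref{equ:EulerSumDefn}: peeling off the largest index $k_1=k$ gives $\zeta(\ol a,\bfw)=\sum_{k\ge 1}\frac{(-1)^{k}}{k^{a}}H_{k-1}(\bfw)$, the negative of the preceding series.

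To finish I would match notation. The first component of $\gk_I(\tbfs)$ is $a=\gf\circ\gk_I(\tbfs)$ and its truncation is $\bfw=\Trunc\circ\gk_I(\tbfs)$, so by Definition~\ref{defn:notation} the pair $(\ol a,\bfw)$ equals $\nu\circ\gk_I(\tbfs)$, whence the inner limit is precisely $-\zeta\bigl(\nu\circ\gk_I(\tbfs)\bigr)$. Substituting this into the $n\to\infty$ limit of \eqref{equ:GenMain} and retaining the prefactors gives
\[
\zeta^\star(\bfs)=-2\sum_{I\subseteq [r]}\
\sum_{\substack{i_t+j_t+|\bfx_t|=c_t,\\ i_t\ge 1,\,j_t\ge 2\ \forall\, t\notin I}}
2^{|\oI|+\sum_{t\notin I}l(\bfx_t)}\,\zeta\bigl(\nu\circ\gk_I(\tbfs)\bigr),
\]
which is the content of \eqref{equ:GenMainMZSV}; the factor $-1$ supplied by the key lemma is exactly the overall sign written out in the equivalent form of Theorem~\ref{thm:IntroGenThm2.1MZSV} and visible in the worked examples, such as the trailing $-2\zeta(\ol{30})$ term coming from $I=[r]$.

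I expect the key lemma to be the only genuine obstacle. The difficulty is that both the summand (through $A_{n,k}$) and the range of summation ($k\le n$) depend on $n$, so one cannot pass to the limit termwise without uniform control; it is the uniform bound $|A_{n,k}|\le 1$ together with the strict inequality $\gf\circ\gk_I(\tbfs)\ge 2$ that makes a clean dominated-convergence argument available. Had the leading exponent been allowed to equal $1$, one would instead need an Abel-summation or alternating-series estimate. Everything after the key lemma is bookkeeping with the operators $\gk_I$, $\gf$, $\Trunc$ and $\nu$ inherited from Theorem~\ref{thm:GenThm2.1}.
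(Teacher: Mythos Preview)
Your proposal is correct and follows the same overall strategy as the paper: let $n\to\infty$ in Theorem~\ref{thm:GenThm2.1} and justify that each inner sum converges to the corresponding alternating Euler sum. The difference lies in how the key limiting step is handled. The paper isolates Lemma~\ref{lem:limitBound}, proving
\[
\lim_{n\to\infty}\sum_{k=1}^n \frac{|H_{k-1}(\bfw)|}{k^e}\Bigl(1-\tfrac{\binom{n}{k}}{\binom{n+k}{k}}\Bigr)=0\qquad(e>1)
\]
via the explicit quantitative estimate $1-\binom{n}{k}/\binom{n+k}{k}\le 2k^2/n$, followed by a split of the sum at $k\approx n^\delta$. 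You instead observe directly that $|A_{n,k}|=\prod_{i=1}^{k}\frac{n-i+1}{n+i}\in(0,1]$, that $A_{n,k}\to(-1)^{k-1}$ pointwise, and that the dominating series $\sum_k |H_{k-1}(\bfw)|/k^a$ converges because $a=\gf\circ\gk_I(\tbfs)\ge 2$; dominated convergence then finishes in one line. Your route is shorter and avoids the combinatorial estimate entirely, while the paper's lemma is slightly more general in that it is stated for all real $e>1$ (though only $e\ge 2$ is needed here). You also correctly track the sign: the limit of the inner sum is $-\zeta\bigl(\nu\circ\gk_I(\tbfs)\bigr)$, matching the minus sign displayed in Theorem~\ref{thm:IntroGenThm2.1MZSV} and in the worked examples.
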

\begin{proof}
By taking limit $n\to \infty$ in Theorem~\ref{thm:GenThm2.1} we see that \eqref{equ:GenMainMZSV}
follows immediately from the following lemma.
\end{proof}

\begin{lem} \label{lem:limitBound}
Let $d\in\N_0$ and let $e$ be a real number with $e>1$. Then for all $\bfs\in(\Z^*)^d$ ($\bfs=\emptyset$ if $d=0$) we have
 \begin{equation}\label{equ:limit0}
\lim_{n\to\infty} \sum_{k=1}^n\frac{|H_{k-1} (\bfs)|}
 {k^e} \left(1-\frac{\binom{n}{k}}{\binom{n+k}{k}}\right)=0.
\end{equation}
\end{lem}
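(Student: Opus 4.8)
The plan is to control the nonnegative weight factor $w_{n,k} := 1 - \binom{n}{k}/\binom{n+k}{k}$ uniformly and then split the sum according to whether $k$ is small or large compared with $n$. First I would rewrite the binomial quotient as a product by a direct cancellation of factorials:
\[
\frac{\binom{n}{k}}{\binom{n+k}{k}} = \frac{(n!)^2}{(n-k)!\,(n+k)!} = \prod_{i=1}^k \frac{n+1-i}{n+i},
\]
and since each factor lies in $(0,1)$ for $1\le k\le n$, we get $0<w_{n,k}<1$. Writing $1-\frac{n+1-i}{n+i}=\frac{2i-1}{n+i}$ and using the elementary inequality $1-\prod_i a_i\le\sum_i(1-a_i)$ for $a_i\in[0,1]$, I would then extract the key bound $w_{n,k}\le\sum_{i=1}^k\frac{2i-1}{n+i}\le\frac1n\sum_{i=1}^k(2i-1)=\frac{k^2}{n}$. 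This captures both features I need: for each fixed $k$ the factor tends to $0$ as $n\to\infty$, and it is dominated by $\min(1,k^2/n)$.

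Next I would bound the harmonic sums independently of $n$. Since replacing every $s_i$ by $1$ and dropping signs only increases absolute values, $|H_{k-1}(\bfs)|\le H_{k-1}(\{1\}^d)\le\frac{1}{d!}\big(\sum_{j=1}^{k-1}\tfrac1j\big)^d\le\frac{(1+\log k)^d}{d!}$, which is sub-polynomial in $k$ (and equals $1$ when $d=0$, matching the convention $H_{k-1}(\emptyset)=1$). Because $e>1$, this makes the comparison series $\sum_{k\ge1}|H_{k-1}(\bfs)|/k^e$ convergent; I would denote its tail from $k=K+1$ onward by $\rho_K$, so that $\rho_K\to0$ as $K\to\infty$.

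Finally I would split the sum at a cutoff growing slowly with $n$, say $K=\lfloor n^{1/3}\rfloor$. For $k\le K$ the product bound gives $w_{n,k}\le k^2/n\le K^2/n\le n^{-1/3}$, so that portion is at most $n^{-1/3}\sum_{k\ge1}|H_{k-1}(\bfs)|/k^e\to0$; for $k>K$ I would use $w_{n,k}<1$ and bound that portion by the tail $\rho_K\to0$. Together these force the whole expression to $0$. The one genuine subtlety, and the step I would watch most carefully, is precisely this interchange of the limit with a sum whose length grows with $n$: one cannot simply pass $n\to\infty$ inside a finite sum term by term, so the uniform domination by $\min(1,k^2/n)$ together with the convergent majorant is essential (equivalently, one may phrase the final step as the dominated convergence theorem on $\N$ with counting measure, the majorant being $|H_{k-1}(\bfs)|/k^e$). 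Everything else is a routine estimate.
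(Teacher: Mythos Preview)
Your proof is correct and follows the same overall strategy as the paper: bound the weight $w_{n,k}=1-\binom{n}{k}/\binom{n+k}{k}$ by a multiple of $k^2/n$, bound $|H_{k-1}(\bfs)|$ by a power of $\log k$, then split the sum at a threshold growing with $n$ and use the trivial bound $w_{n,k}<1$ on the tail.

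The execution differs in two places, and in both your version is tidier. First, your derivation of the key estimate $w_{n,k}\le k^2/n$ via the product form $\prod_{i=1}^k\frac{n+1-i}{n+i}$ and the inequality $1-\prod a_i\le\sum(1-a_i)$ is considerably shorter than the paper's route, which expands numerator and denominator in terms of the elementary symmetric sums $H_k(\{1\}^j)$ and then compares consecutive ones via $H_k(\{1\}^j)\le k^{k-j}H_k(\{1\}^k)$ to reach $w_{n,k}\le 2k^2/n$. Second, the paper keeps the full factor $k^2/n$ in the small-$k$ range and so must do a case split according to whether $e\ge 2$ or $1<e<2$, with a cutoff $n^\delta$ where $\delta=\min\{(e-1)/2,1/2\}$; your choice to simply bound $k^2/n\le K^2/n\le n^{-1/3}$ uniformly for $k\le K=\lfloor n^{1/3}\rfloor$ sidesteps that case analysis entirely. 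The dominated convergence remark is also a welcome clarification of the one genuine subtlety.
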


\begin{rem}
This is the key step which enables us to go from MHS identities to MZSV identities in this paper. Apparently the authors of \cite{HessamiPilehrood2Tauraso2012} have already used this result in their paper although no proof is given there. Because of its importance we provide the following detailed analysis. Notice that for any fixed $k$ we have $\lim_{n\to\infty} \binom{n}{k}/\binom{n+k}{k} =1$ but for $k$ close to $n$, say $k=n$, we have
$\lim_{n\to\infty}  \binom{n}{k}/\binom{n+k}{k} =0$ by Stirling's formula. So \eqref{equ:limit0} is not obvious to us.
\end{rem}

\begin{proof}
First we have
\begin{align}
0< &1-\frac{\binom{n}{k}}{\binom{n+k}{k}}
=1-\frac{n(n-1)(n-2)\cdots (n-k+1)}{(n+1)(n+2)\cdots(n+k)} \notag \\
=&1-\frac{n(n-1)(\frac{n}2-1)\cdots (\frac{n}{k-1}-1)}{k(n+1)(\frac{n}2+1)\cdots (\frac{n}k+1)} \notag \\
=& 1-\frac{n^kH_{k-1}(\{1\}^{k-1})-n^{k-1}H_{k-1}(\{1\}^{k-2})+\cdots+(-1)^{k} n^2H_{k-1}(1)-n(-1)^{k}} {k\big(n^kH_k(\{1\}^k)+n^{k-1}H_k(\{1\}^{k-1})+\cdots+nH_k(1)+1\big)} \label{equ:cancelDeno}\\
\le &  \frac{n^{k-1}H_k(\{1\}^{k-2})+n^{k-2}H_k(\{1\}^{k-3})+\cdots+ n^2 H_k(1)+n} {k(n^kH_k(\{1\}^k)+n^{k-1}H_k(\{1\}^{k-1})+\cdots+nH_k(1)+1)}  \label{equ:cancelDeno2}\\
+&\frac{n^{k-1}H_k(\{1\}^{k-1})+\cdots+nH_k(1)+1} {n^kH_k(\{1\}^k)+n^{k-1}H_k(\{1\}^{k-1})+\cdots+nH_k(1)+1}. \notag
\end{align}
Here from \eqref{equ:cancelDeno} to \eqref{equ:cancelDeno2} we canceled the leading term of the numerator in \eqref{equ:cancelDeno} because of $H_k(\{1\}^k)=1/k!$, took
absolute values of each term on the right and then changed all $H_{k-1}$ to $H_k$.
Observe that for any $1\le j\le k$ we have
\begin{align*}
   H_k(\{1\}^j)=&\sum_{k\ge n_1>\dots>n_j>0} \frac{1}{n_1 \cdots n_j} \\
   \le &\sum_{k\ge n_1>\dots>n_j>n_{j+1}>0} \frac{k}{n_1  \cdots n_j\cdot n_{j+1}}
   =kH_k(\{1\}^{j+1})\le\cdots\le k^{k-j}H_k(\{1\}^k),
\end{align*}
which is even true for $j=0$. Thus we get
\begin{align*}
    1-\frac{\binom{n}{k}}{\binom{n+k}{k}}\le
   \frac{2(n^{k-1}k+n^{k-2}k^2+\cdots+nk^{k-1}+k^k)}{n^k}
    \le \frac{2k^2}n.
\end{align*}
Now for any $\bfs\in(\Z^*)^d$ we clearly have
\begin{equation*}
|H_{k-1}(\bfs)| \le H_{k-1}(\{1\}^d)\le C\log^d (k)
\end{equation*}
for some positive constant $C$. Therefore by setting $\delta=\min\{(e-1)/2,1/2\}>0$ we have
\begin{equation}\label{equ:inequ}
\sum_{k=1}^n\frac{|H_{k-1}(\bfs)|}{k^e} \left(1-\frac{\binom{n}{k}}{\binom{n+k}{k}}\right)
 \le \frac{2C}{n} \sum_{k=1}^{\lfloor n^\delta\rfloor} \frac{\log^d (k)}{k^{e-2}} +  C\sum_{k=\lfloor n^\delta\rfloor+1}^{\infty} \frac{\log^d (k)}{k^e}.
\end{equation}
If $e\ge 2$ then
\begin{equation*}
 \frac{2C}{n} \sum_{k=1}^{\lfloor n^\delta\rfloor} \frac{\log^d (k)}{k^{e-2}}\le
 \frac{2C}{n} \sum_{k=1}^{\sqrt{n}} \log^d (k)  \le
\frac{2C\log^{d}(\sqrt{n})}{\sqrt{n}} \to 0  \text{ as $n\to \infty$.}
\end{equation*}
If $1<e< 2$ then
\begin{equation*}
 \frac{2C}{n} \sum_{k=1}^{\lfloor n^\delta\rfloor} \frac{\log^d (k)}{k^{e-2}}\le
 \frac{2C n^{\delta(3-e)}\log^{d}(n^\delta)}{n}\le  \frac{2C \delta^d \log^{d}(n)}{\sqrt{n}} \to 0 \text{ as $n\to \infty$},
\end{equation*}
since $1+\delta(e-3)=1+(e-1)(e-3)/2=(e-2)^2/2+1/2\ge 1/2$.
Now the last sum on the right of \eqref{equ:inequ} is the tail of a convergent series
so \eqref{equ:limit0} follows immediately and therefore the lemma is proved.
\end{proof}

\medskip
\noindent
\textbf{Acknowledgement}. We would like to thank Roberto Tauraso for sending us their
preprint \cite{HessamiPilehrood2Tauraso2012} upon which we can build our current work,
Wadim Zudilin for kindly showing us an improvement on Lemma~\ref{lem:limitBound}, and
K.\ Hessami Pilehrood, T.\ Hessami Pilehrood and Y.\ Ohno for pointing out some misprints in the
first version of this paper. We thank the anonymous referees who provided a few very insightful remarks
which are incorporated in the paper. JZ is supported by an NSF grant DMS1162116.


\begin{thebibliography}{99}

\bibitem{Broadhurst1996}
D.J.\ Broadhurst, Conjectured enumeration of
irreducible multiple zeta values, from knots and Feynman diagrams,
preprint, \url{arXiv: hep-th/9612012}

\bibitem{Brown2012}
F.\ Brown, Mixed Tate motives over Spec($\Z$),
\emph{Ann.\ Math.} \textbf{175}(2) (2012), pp.\  949--976.

\bibitem{Deligne2010}
P.\ Deligne, Le groupe fondamental de la $\G_m-\mmu_N$, pour
$N =$ 2, 3, 4, 6 ou 8,
\emph{Publ.\ Math.\ Inst.\ Hautes Etudes Sci.} \textbf{112} (2010), pp.\ 101--141.

\bibitem{DeligneGo2005}
P.\ Deligne and A.\ Goncharov, Groupes fondamentaux motiviques de Tate mixte,
\emph{Ann.\ Sci.\ Ecole Norm.\ S.} \textbf{38}(1) (2005), pp.\ 1--56.

\bibitem{GoncharovMa2004}
A.B.\ Goncharov and Y.I.\ Manin, Multiple $\zeta$-motives and moduli spaces $M_{0,n}$,
\emph{Compositio Math.} \textbf{140} (2004), pp.\ 1--14.

\bibitem{HessamiPilehrood22013}
Kh.\ Hessami Pilehrood and T.\ Hessami Pilehrood,
On $q$-analogues of two-one formulas for multiple harmonic sums and multiple zeta star values,
preprint arXiv: 1304.0269[NT].

\bibitem{HessamiPilehrood2Tauraso2012}
Kh.\ Hessami Pilehrood, T.\ Hessami Pilehrood and R.\ Tauraso,
New properties of multiple harmonic sums modulo $p$ and $p$-analogues of Leshchiner's series,
to appear in \emph{Trans.\ Amer.\ Math.\ Soc.},
DOI: http://dx.doi.org/10.1090/S0002-9947-2013-05980-6, arXiv:1206.0407

\bibitem{HessamiPilehrood2Zhao2013}
Kh.\ Hessami Pilehrood, T.\ Hessami Pilehrood and J.\ Zhao,
On $q$-analogues of some families of multiple harmonic sum and multiple zeta star value identities,
arXiv: 1307.7985

\bibitem{Zagier2012}
D.\ Zagier, Evaluation of the multiple zeta values $\zeta(2,\dots , 2, 3, 2,\dots , 2)$,
\emph{Ann. of Math.} \textbf{175} (2012), pp.~977--1000.

\bibitem{Zhao2008a}
J.~Zhao,  Wolstenholme type Theorem for multiple harmonic sums,
\emph{Int. J. of Number Theory} \textbf{4} (2008), no.~1, pp.~73--106,
arXiv:math.NT/0301252

\bibitem{Zhao2013}
J.~Zhao, Identity families of multiple harmonic sums and multiple zeta (star) values,
preprint arXiv:1303.2227

\end{thebibliography}
\end{document}